\documentclass[11pt]{amsart}

\pdfoutput=1

\usepackage{amsmath} 
\allowdisplaybreaks 
\usepackage[foot]{amsaddr}

\usepackage{appendix}
\usepackage{cancel}
\usepackage{esint,amssymb} 
\usepackage{graphicx}
\usepackage{MnSymbol}
\usepackage{mathtools} 
\usepackage{xcolor}
\usepackage[colorlinks=true, pdfstartview=FitV, linkcolor=blue, citecolor=blue, urlcolor=blue,pagebackref=false]{hyperref}
\usepackage{orcidlink}
\usepackage{microtype}

\usepackage{bm}
\usepackage{dsfont}
\usepackage{mathrsfs}
\usepackage{enumitem}

\definecolor{darkgreen}{rgb}{0,0.5,0}
\definecolor{darkblue}{rgb}{0,0,0.7}
\definecolor{darkred}{rgb}{0.9,0.1,0.1}

\newtheorem{proposition}{Proposition}
\newtheorem{theorem}[proposition]{Theorem}
\newtheorem{lemma}[proposition]{Lemma}

\theoremstyle{remark}
\newtheorem{remark}[proposition]{Remark}

\theoremstyle{definition}
\newtheorem{definition}[proposition]{Definition}

\numberwithin{equation}{section}
\numberwithin{proposition}{section}
\numberwithin{figure}{section}
\numberwithin{table}{section}

\newcommand{\N}{\mathbb{N}}
\newcommand{\Q}{\mathbb{Q}}
\newcommand{\R}{\mathbb{R}}

\newcommand{\E}{\mathbb{E}}
\renewcommand{\P}{\mathbb{P}}

\renewcommand{\leq}{\leqslant}
\renewcommand{\geq}{\geqslant}

\renewcommand{\subset}{\subseteq}
\renewcommand{\bar}{\overline}
\renewcommand{\tilde}{\widetilde}

\newcommand{\Ll}{\left}
\newcommand{\Rr}{\right}
\renewcommand{\d}{\mathrm{d}}

\newcommand{\mcl}{\mathcal}

\newcommand{\mfk}{\mathfrak}
\newcommand{\msc}{\mathscr}

\newcommand{\eps}{\varepsilon}

\newcommand{\la}{\left\langle}
\newcommand{\ra}{\right\rangle}

\renewcommand{\S}{\mathrm{S}}

\DeclareMathOperator{\supp}{supp}

\newcommand{\fR}{\mathfrak{R}}

\newenvironment{e}{\begin{equation}}{\end{equation}\ignorespacesafterend}
\newenvironment{e*}{\begin{equation*}}{\end{equation*}\ignorespacesafterend}

\begin{document}

\author{Hong-Bin Chen\,\orcidlink{0000-0001-6412-0800}}
\address[Hong-Bin Chen]{NYU-ECNU Institute of Mathematical Sciences, NYU Shanghai, China}
\email{\href{mailto:hongbin.chen@nyu.edu}{hongbin.chen@nyu.edu}}

\author{Victor Issa\,\orcidlink{0009-0009-1304-046X}}
\address[Victor Issa]{Department of Mathematics, ENS Lyon, France}
\email{\href{mailto:victor.issa@ens-lyon.fr}{victor.issa@ens-lyon.fr}}


\title[Convex vector spin glasses with Mattis interaction]{Vector spin glasses with Mattis interaction I: the convex case}

\begin{abstract}
This paper constitutes the first part of a two-paper series devoted to the systematic study of vector spin glass models whose energy function involves a spin glass part and a general Mattis interaction part. 

In this paper, we focus on models whose spin glass part satisfies the usual convexity assumption. 
We identify the limit free energy via a Parisi-type formula and prove a large deviation principle for the mean magnetization. The proof is remarkably simple and short compared to previous approaches; it relies on treating the Mattis interaction as a parameter of the model. 

In the companion paper \cite{MattisII}, we establish similar results in the high-temperature regime for models whose spin glass part is not assumed to satisfy the usual convexity assumption.

 \bigskip

    \noindent \textsc{Keywords and phrases: Disordered systems, Large deviation principles, Spin glasses.}  

    \medskip

    \noindent \textsc{MSC 2020: 82B44, 
                 60F10, 
                 35F21.
                 }

\end{abstract}

\maketitle

\setcounter{tocdepth}{1}
\thispagestyle{empty}
{
  \hypersetup{linkcolor=black}
  \setcounter{tocdepth}{1}
  \tableofcontents
}


\section{Introduction}

\subsection{Preamble}

This paper is the first of a two-part series in which we investigate general mean-field spin glass models with Mattis interaction. In this paper, we focus on models whose spin glass part satisfies the usual convexity assumption \eqref{e.convexity}.
A simple instance of those models is given by  a system of $\pm 1$ spins interacting through the energy function 
\begin{equation} \label{e.simple}
    E_N(\sigma) = \frac{\beta}{\sqrt{N}} \sum_{i,j = 1}^N W_{ij} \sigma_i \sigma_j + \left( \frac{1}{\sqrt{N}}\sum_{i = 1}^N \chi_i \sigma_i \right)^2, \qquad \forall \sigma \in \{-1,1\}^N, 
\end{equation}
where $(W_{i,j})_{1 \leq i,j \leq N}$ are independent centered gaussian random variables with variance $1$ and $(\chi_i)_{1 \leq i \leq N}$ are centered $\pm 1$ random variables. 
Our investigation is motivated in part by the study of statistical inference problems with mismatched prior and noise. In modern statistical inference, many high-dimensional estimation problems, such as recovering a low-rank signal from noisy observations, can be formulated in terms of energy-based models. In particular, when the assumed prior on the signal or the noise distribution used by the estimator does not match the true generative model, the resulting posterior landscape behaves like a generalized Sherrington--Kirkpatrick spin glass with additional Mattis interaction \cite{barbier2025mismatchlinear,barbier2022priceofignorance,krzakala2016thresholds,macris2022mismatched,chen2014mixedSK,camilli2022mismatch}. Roughly speaking, the idea is that 
for inference problems with mismatched prior and noise, one can reduce the computation of the mutual information in the statistical problem to the computation of the free energy of a modified spin glass model similar to \eqref{e.simple} and derive asymptotic formulas for the mutual information and overlaps (order parameters) that characterize estimator performance. This connection provides rigorous large deviation principles for key quantities, such as overlaps between the estimate and the true signal.

In this series of papers~\cite{MattisI,MattisII}, our interest is twofold. First, we prove variational formulas for the large $N$ limit of the free energy defined by  
\begin{equation*} 
    f_N = -\frac{1}{N} \log \left( \frac{1}{2^N} \sum_{\sigma \in \{-1,1\}^N} e^{E_N(\sigma)} \right).
\end{equation*}
Second, we establish large deviation principles for the mean magnetization
\begin{e*}
    m_N = \frac{1}{N} \sum_{i = 1}^N \chi_i \sigma_i
\end{e*}
under the Gibbs measure $\langle \cdot \rangle_N$ defined by 
\begin{equation*}
 \langle g(\sigma) \rangle_N \propto  \frac{1}{2^N} \sum_{\sigma \in \{-1,1\}^N} g(\sigma) e^{E_N(\sigma)}.
\end{equation*}
%

Our main results are stated for very general models in Theorem~\ref{t.F_N(...)} and Theorem~\ref{t.LDP_gen}. Here we state a specialized version of those results for the model defined by \eqref{e.simple}, which is an instance of a model satisfying the convexity assumption \eqref{e.convexity} defined below. The definition of the function $\psi = \psi(q;x)$ appearing in this statement is given below in \eqref{e.psi(q;x)=}.
\begin{theorem}\label{t.parisi+ldp.basic}
    Almost surely over the randomness of $(W_{i,j})_{i,j}$ and $(\chi_i)_i$, we have
    \begin{e*}
        \lim_{N \to +\infty} f_N = \inf_{m} \sup_{x,p} \left\{ \psi(\beta^2 p;x) - \frac{\beta^2}{2} \int_0^1 p(s)^2 \d s + m x - m^2 - \frac{\beta^2}{2}\right\},
    \end{e*}
    where the infimum is taken over $m \in \R$ and the supremum over $x \in \R$ and $p : [0,1) \to \R_+$ bounded increasing càdlàg paths. Furthermore, still almost surely over the randomness of $(W_{i,j})_{i,j}$ and $(\chi_i)_i$, the random variable $m_N$ satisfies a large deviation principle under $\langle \cdot \rangle_N$ with rate function 
    \begin{e*}
        J(m) = -m^2 + \phi^*(m) + \sup_{m' \in \R} \left\{(m')^2 - \phi^*(m') \right\}.
    \end{e*}
    Here $\phi^*(m) = \sup_{x \in \R} \left\{xm + \phi(x)\right\}$ and $\phi(x)$ is the almost sure limit as $N \to +\infty$ of the free energy $f_N$ where the term $\left( \frac{1}{\sqrt{N}}\sum_{i = 1}^N \chi_i \sigma_i \right)^2$ in $E_N$ is replaced by $ x \sum_{i = 1}^N \chi_i \sigma_i$. 
\end{theorem}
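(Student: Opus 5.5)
The approach is to treat the Mattis term as a parameter. Define the tilted free energy
\begin{e*}
\phi_N(x) = -\frac{1}{N}\log\Big(\frac{1}{2^N}\sum_{\sigma} \exp\Big(\frac{\beta}{\sqrt N}\sum_{i,j} W_{ij}\sigma_i\sigma_j + x\sum_i \chi_i\sigma_i\Big)\Big).
\end{e*}
This is an SK model with a random external field $x\chi_i$. Since the $\chi_i$ are symmetric $\pm1$ variables, gauge invariance ($\sigma_i \mapsto \chi_i\sigma_i$, which leaves the law of $W$ invariant) reduces it to an SK model with a deterministic field $x$.

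\textbf{Step 1.} The Parisi formula for the convex (SK) case, with the sign convention $f=-\log Z$, gives almost surely and for all $x$ simultaneously
\begin{e*}
\phi_N(x) \to \phi(x) = -\inf_p\Big\{\psi(\beta^2 p;x) - \tfrac{\beta^2}{2}\int_0^1 p^2 - \tfrac{\beta^2}{2}\Big\}
\end{e*}
up to sign conventions. Simultaneity in $x$ follows from convergence at rational $x$ together with the fact that each $\phi_N$ is concave and $N^{-1}\sum_i|\chi_i|$-Lipschitz in $x$. Almost sure convergence comes from Gaussian concentration plus Borel--Cantelli. Note that $-\phi_N$ is convex in $x$, being a log-Laplace transform of $Nm_N$ under the pure SK Gibbs measure $\langle\cdot\rangle^0_N$. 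Concretely,
\begin{e*}
\phi_N(0)-\phi_N(x) = \frac1N\log\langle e^{xNm_N}\rangle^0_N .
\end{e*}

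\textbf{Step 2: LDP under the pure SK measure.} By Gärtner--Ellis, $m_N$ under $\langle\cdot\rangle^0_N$ satisfies an LDP with rate function
\begin{e*}
I_0(m) = \sup_x\{xm - \phi(0)+\phi(x)\} = \phi^*(m) - \phi(0),
\end{e*}
provided $\phi$ is differentiable. Differentiability in $x$ holds because the Parisi functional with field $x$ has derivative $-\E\langle\sigma_1\rangle$ type expressions. More simply, it follows from the convexity-based argument that the concave limit of $\phi_N$ has derivative equal to the limit of the magnetization, which is unique by the strict convexity of the Parisi functional in $p$ (Auffinger--Chen). If differentiability is delicate, I would instead get the upper bound from the exponential Chebyshev inequality and the lower bound via tilting at exposed points. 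This is the step I expect to be the main obstacle: \emph{a priori} $\phi^*$ may have non-exposed points, and I would handle it by using concavity of $m\mapsto m^2$ only at the end, or by establishing differentiability of $\phi$ directly from the Parisi formula.

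\textbf{Step 3: add the Mattis term.} The full energy equals the SK energy plus $N m_N^2$, with $m_N$ bounded by $1$, and $m\mapsto m^2$ is continuous. Varadhan's lemma then gives
\begin{e*}
\lim_N f_N = \phi(0) - \sup_m\{m^2 - I_0(m)\} = \inf_m\{\phi^*(m) - m^2\}.
\end{e*}
The tilted LDP yields the rate function $J(m) = I_0(m) - m^2 + \sup_{m'}\{(m')^2 - I_0(m')\}$, which matches the stated $J$. Substituting $\phi^*(m)=\sup_x\{xm+\phi(x)\}$ and the Parisi formula for $\phi(x)$ gives
\begin{e*}
\inf_m\sup_x\Big\{xm - m^2 + \phi(x)\Big\},
\end{e*}
and $\phi(x)$, written as $-\inf_p$, becomes a $\sup_p$ of the required expression. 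The signs must be arranged consistently with the definition \eqref{e.psi(q;x)=} of $\psi$, which I would verify against that definition.
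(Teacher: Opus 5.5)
You have the paper's architecture: the Parisi formula with the linear field $x\sum_i\chi_i\sigma_i$, almost sure convergence on $\Q$ extended to all $x$ by uniform Lipschitz bounds, G\"artner--Ellis under $\langle\cdot\rangle^0_N$, Varadhan for $e^{Nm_N^2}$, and then tilting. The paper obtains the tilting from Bryc's inverse Varadhan lemma; your appeal to the standard tilted LDP is equivalent. Your gauge reduction to a deterministic field is a harmless convenience that the paper does not need.

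The real divergence is how you show $\phi$ is differentiable. Write $\mathscr P(p,x)$ for the functional inside the supremum of the Parisi formula for $\phi(x)$. Differentiability of each $\mathscr P(p,\cdot)$ does not by itself make $\phi=\sup_p\mathscr P(p,\cdot)$ differentiable. Your Auffinger--Chen route can be completed for this Ising SK model. You would combine uniqueness of the Parisi measure with a Danskin envelope argument over a compact set of order parameters, using continuity of $\partial_x\mathscr P$; note that their strict convexity is in the measure, not in the quantile path $p$. But this imports a deep model-specific theorem, and you did not supply the envelope step.

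The paper uses nothing about optimizers. It uses that $|\partial_x^2\mathscr P(p,x)|\leq\|h\|_\infty^2$ uniformly in $p$, since this is minus an averaged Gibbs variance of $h$. So $\phi$ is semiconvex as a supremum of uniformly semiconvex functions, and it is concave as the limit of the concave $\phi_N$. Testing any supergradient against $\nabla_x\mathscr P(p_\eps,x)$, for an $\eps$-maximizer $p_\eps$ with step $r=\sqrt{\eps}$, shows the supergradient is unique. This is shorter and applies verbatim in the general vector setting.

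Your fallbacks would not substitute for this. Exposed-point tilting, or a Hubbard--Stratonovich linearization (which uses convexity, not concavity, of $m\mapsto m^2$), can at best recover the free-energy formula. They do not give the LDP with rate $J$, because without differentiability the rate function under $\langle\cdot\rangle^0_N$ need not be $\phi^*-\phi(0)$.

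Finally, your Step~1 display has the signs reversed. With $\psi$ as in \eqref{e.psi(q;x)=}, one has $\phi(x)=\sup_p\{\psi(\beta^2p;x)-\frac{\beta^2}{2}\int_0^1p(s)^2\,\d s\}-\frac{\beta^2}{2}$. This is exactly what makes your last display match the statement.
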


Note that as a consequence of Theorem~\ref{t.F_N(...)} below, the quantity $\phi(x)$ used to define the rate function $J$ can be expressed as a variational formula. 

For models satisfying the aforementioned convexity assumption \eqref{e.convexity}, computing the free energy in the absence of Mattis interaction is a classical problem, and it is now well known that the limit of the free energy can be understood as the supremum of an explicit functional, this is the celebrated Parisi formula \cite{parisi1979infinite,gue03,Tpaper}. 
The free energy can also be studied from the perspective of Hamilton--Jacobi equations, which was initiated in~\cite{guerra2001sum} and followed by~\cite{barra2,abarra,barramulti,barra2014quantum,barra2008mean,genovese2009mechanical}.
Recently, a more systematic and general treatment through the prism of Hamilton--Jacobi equations was proposed in \cite{mourrat2019parisi,mourrat2020free,HJbook}, the progress of which is summarized in~\cite{mourrat2025spin}. 

On the other hand, models with Mattis interaction have not yet received a similar systematic treatment. The identification of their free energy relies on model-dependent proofs. Furthermore, especially in cases where authors are interested in large deviation principles for the mean magnetization, the proofs involve computing a constrained version of the free energy defined by
\begin{equation*}
    f_N(A) = -\frac{1}{N} \log \left( \frac{1}{2^N} \sum_{\sigma \in A} e^{E_N(\sigma)} \right).
\end{equation*}
The proofs can sometimes be long and technical \cite{guionnet2025estimating,franz2020largedeviations,camilli2022mismatch,pan.vec,chen2014mixedSK}. 
Similar techniques have also been employed to prove large deviation principles for the overlap parameter in the context of classical spin glass models \cite{jagannath2018spectralgap,jagannath2020tensorpca}, also see \cite{ko2020multiplespherical} 
for related content. For spin glass models with additional Mattis interaction, there are no known general results that allow for a systematic identification of the free energy, nor the verification of large deviation principles for the mean magnetization. 

In this series of papers, we resolve this problem and establish systematic results using a new approach that relies on the computation of the following object
\begin{equation*}
    f_N^G = -\frac{1}{N} \log \left( \frac{1}{2^N} \sum_{\sigma \in \{-1,1\}^N} \exp\left(\frac{\beta}{\sqrt{N}} \sum_{i,j = 1}^N W_{ij} \sigma_i \sigma_j + NG(m_N) \right) \right).
\end{equation*}
The main difference in our approach is that the continuous function $G$ encoding the Mattis interaction is now treated as a \emph{parameter of the model} instead of being fixed. Furthermore, as opposed to the traditional approach, we do not impose constraints on the spin configuration $\sigma$. This allows for smoother, shorter, and less technical computations that one can carry out in a model-independent way. Our main theorems cover many previous model-specific results. For example, this includes \cite[Theorems 2.6 \& 2.7]{guionnet2025estimating}, \cite[Theorem~1]{camilli2022mismatch}, and \cite[Theorem~1]{chen2014mixedSK}.

We start with the case $G(m) = xm$, where the sum structure of  $N G(m_N) = x \sum_{i = 1}^N \chi_i \sigma_i$ allows for the computation of $\phi(x) = \lim_{N \to +\infty} f_N^{m \mapsto xm}$ using very classical arguments similar to those needed to establish the Parisi formula with essentially no modification. Then, we verify that $\phi$ is a continuously differentiable function of $x$. This ensures, through the Gärtner--Ellis theorem, that $m_N$ satisfies a large deviation principle under $\langle \cdot \rangle^0_N$. Varadhan's lemma then allows us to compute the large $N$ limit of $\frac{1}{N} \log \langle e^{NG(m_N)}\rangle^0_N$ for all continuous functions $G$. From this, we deduce a large deviation principle for $m_N$ under $\langle \cdot \rangle^G_N$ and an expression for $\lim_{N \to +\infty} f_N^G$. For the first part, this is a direct consequence of Bryc's inverse Varadhan lemma, and the second part follows by observing that 
\begin{e*}
    \frac{1}{N} \log \langle e^{NG(m_N)}\rangle^0_N = -f_N^G + f_N^0.
\end{e*}
Once this is done, one can recover Theorem~\ref{t.parisi+ldp.basic} by setting $G(m) = m^2$.

\subsection{Setting} \label{ss.setting}
In this section, we give a definition of the class of models that we treat. 

For $m,n\in\N$, we denote by $\R^{m\times n}$ the space of all $m\times n$ real matrices. For any $a=(a_{ij})_{1\leq i\leq m,\, 1\leq j\leq n}\in \R^{m\times n}$, we denote its $j$-th column vector as $a_{\bullet j}=(a_{ij})_{1\leq i\leq m}$ and its $i$-th row vector as $a_{i\bullet}=(a_{ij})_{1\leq j\leq n}$. If not specified, a vector is understood to be a column vector. For a matrix or vector $a$, we denote by $a^\intercal$ its transpose.
For $a,b\in \R^{m\times n}$, we write $a\cdot b= \sum_{ij}a_{ij}b_{ij}$, $|a|=\sqrt{a\cdot a}$, and similarly for vectors. 
For $n\in \N$, let $\S^n$ be the linear space of $n\times n$ real symmetric matrices 
and also let $\S^n_+$ (resp.\ $\S^n_{++}$) be the subset consisting of positive semi-definite (resp.\ definite) matrices. For $a,b \in \S^n$, we write $a\geq b$ provided $a-b\in\S^n_+$, which gives a natural partial order on $\S^n$.

We start by defining the spin glass part of the model. Let $D\in\N$ be the dimension of a single spin. Let $P_1$ be a finite positive measure supported on the unit ball $\Ll\{\tau\in\R^D:\:|\tau|\leq1\Rr\}$ of $\R^D$ and we assume that the linear span of vectors in $\supp P_1$ is $\R^D$. 

For each $N\in\N$, we denote by $\sigma =(\sigma_{\bullet i})_{i=1}^N =(\sigma_{di})_{1\leq d\leq D, \, 1\leq i\leq N}\in\R^{D\times N}$ the spin configuration consisting of $N$ $\R^D$-valued spins $\sigma_{\bullet i}$ as column vectors. We view $\sigma$ as a $D\times N$ matrix. We sample each spin independently from $P_1$, and thus the distribution of $\sigma$ is given by
\begin{e*}
     \d P_1^{\otimes N}(\sigma)= \otimes_{i=1}^N \d P_1( \sigma_{\bullet i}).
\end{e*}
We let $\xi:\R^{D\times D}\to \R$ be a smooth function that can be expressed as a power series,
and we assume that
\begin{equation}\label{e.convexity} 
    \text{the function $\xi$ is convex on $S^D_+$.} 
\end{equation}
For each $N$, we consider a centered Gaussian field $(H_N(\sigma))_{\sigma\in\R^{D\times N}}$ with covariance structure given by
\begin{e}\label{e.cov}
    \E \Ll[H_N(\sigma)H_N(\sigma')\Rr] = N \xi \Ll( \tfrac{\sigma\sigma'^\intercal}{N}\Rr),\qquad\forall \sigma,\sigma'\in \R^{D\times N}.
\end{e}
We also define $\theta:\R^{D\times D}\to\R$ by
\begin{align}\label{e.theta=}
    \theta (a) = a\cdot\nabla\xi(a)-\xi(a),\quad\forall a\in\R^{D\times D}.
\end{align}

We now describe the Mattis interaction part of the model. Let $L\in\N$ and let $(\chi_i)_{i\in\N}$ be i.i.d.\ $\R^L$-valued random vectors, which are, in particular, independent from $H_N(\sigma)$. Let $d\in\N$ and $h:\R^D\times \R^L \to \R^d$ be a bounded\footnote{Throughout it will be enough to assume that $h$ is bounded on the support of the positive measure $P_1 \otimes \text{Law}(\chi_1)$. } measurable function. For each $i\in\N$, we view $h(\sigma_{\bullet i},\chi_i)$ as a generalized spin and define the mean magnetization
\begin{align}\label{e.m_N=}
    m_N = \frac{1}{N}\sum_{i=1}^N h(\sigma_{\bullet i},\chi_i).
\end{align}
Let $G:\R^d\to\R$ be a continuous function, and we add to the system the Curie--Weiss-type interaction $NG\Ll(m_N\Rr)$. 

At this point, we have described the spin glass part and the Mattis interaction part of our energy function. As explained in the introduction, recently a new approach has been put forward to describe the free energy of spin glass models (without Mattis interaction). In this approach, one needs to enrich the model by adding several new parameters. This more sophisticated object plays a key role in our companion paper~\cite{MattisII}, so we will work with this one in the present paper as well. In order to stick to the classical setting, the reader not interested in this enriched version of the model can throughout the paper choose the parameter $q$ to be the constant path taking the value $0$ and set $t = \beta^2/2$. 

To define the enriched version of the model, we need to add external fields parameterized by increasing paths and driven by Poisson--Dirichlet cascades.  Let $\mcl Q_\infty$ be the collection of bounded increasing càdlàg paths $q:[0,1) \to \S^D_+$, where the monotonicity is understood as $q(s)-q(s')\in\S^D_+$ for $s>s'$. Throughout, given $q \in \mcl Q_\infty$, we will define 
\begin{e*}
    q(1) = \lim_{s \to 1^-} q(s),
\end{e*}
which is well-defined by monotonicity of $q$.

Throughout, let $\fR$ be the Ruelle probability cascade (RPC) with overlap uniformly distributed over $[0,1]$ (see \cite[Theorem~2.17]{pan}). Precisely, $\fR$ is a random probability measure on the unit sphere of a fixed separable Hilbert space (the exact form of the space is not important), with the inner product denoted by $\alpha\wedge\alpha'$. Let $\alpha$ and $\alpha'$ be independent samples from $\fR$. Then, the law of $\alpha\wedge\alpha'$ under $\E \fR^{\otimes 2}$ is the uniform distribution over $[0,1]$, where $\E$ integrates the randomness of $\fR$.
This overlap distribution uniquely determines $\fR$ (see~\cite[Theorem~2.13]{pan}).
Almost surely, the support of $\fR$ is ultrametric in the induced topology. For rigorous definitions and basic properties, we refer to \cite[Chapter 2]{pan} (also see \cite[Chapter 5]{HJbook}).
We also refer to~\cite[Section~4]{chenmourrat2023cavity} for the construction and properties of $\fR$ useful in this work.

For $q\in\mcl Q_\infty$ and almost every realization of $\fR$, let $(w^q(\alpha))_{\alpha\in\supp\fR}$ be the $\R^D$-valued centered Gaussian process with covariance
\begin{align}\label{e.Ew^qw^q=}
    \E\Ll[ w^q (\alpha)w^q(\alpha')^\intercal\Rr]  = q(\alpha\wedge\alpha').
\end{align}
Its existence and properties are given in~\cite[Section~4]{chenmourrat2023cavity}.
Conditioned on $\fR$, let $(w^q_i)_{i\in\N}$ be i.i.d.\ copies of $w^q$ and also independent of other randomness.

For $N\in \N$, $q\in  \mcl Q_\infty$, $t \geq 0$, the full energy function of the system will be chosen to be 
\begin{align} \label{e.H^q,x_N(sigma,alpha,chi)=}
\begin{split}
    H^{G}_N(\sigma,\alpha) &= N G(m_N) + \sqrt{2t} H_N(\sigma) - Nt \xi\Ll(\tfrac{\sigma\sigma^\intercal}{N}\Rr) 
    \\
    &+ \sum_{i=1}^N w^q_i(\alpha)\cdot \sigma_{\bullet i} -\tfrac{1}{2} q(1)\cdot\sigma\sigma^\intercal.   
\end{split} 
\end{align}
We are interested in the infinite-volume limit of the associated free energy defined as
\begin{align}\label{e.F_N(beta)}
    F_N^G = -\frac{1}{N} \log \iint \exp\Ll(H_N^{G}(\sigma,\alpha) \Rr) \d P_1^{\otimes N}(\sigma) \d \mfk R(\alpha).
\end{align}
In principle, $H_N^G$ and $F_N^G$ depend on the choice of $t$ and $q$. Throughout this paper, we will keep those parameters fixed so we do not make this dependence explicitly appear in our notations for $H_N^G$ and $F_N^G$.


As is explained in full detail in Section~\ref{ss.usual}, usual models such as \eqref{e.simple} can be obtained as special cases of the Hamiltonian $H_N^G$. Briefly, to do so one can set $t = \beta^2/2$ and $q = 0$ and make a wise choice for $G$ in order to absorb $- \tfrac{N\beta^2}{2}\xi\Ll(\tfrac{\sigma\sigma^\intercal}{N}\Rr)$ in the Mattis interaction.

Here, we have chosen to introduce the correction terms $Nt \xi\Ll(\sigma\sigma^\intercal/N\Rr)$ and $\frac{1}{2}q(1)\cdot\sigma\sigma^\intercal$ as a convenience to simplify the forthcoming variational formulas. As explained in Section~\ref{ss.usual}, they are easily removed. Also, notice that those terms are one-half of the variance of $ \sqrt{2t}H_N(\sigma)$ and $\sum_{i=1}^Nw^q_i(\alpha)\cdot \sigma_{\bullet i}$, respectively. So they correspond to drifts in exponential martingales.

The large deviation principle we prove holds almost surely with respect to the randomness of $(H_N)_{N \geq 1}$, $(\chi_i)_{i \in \N}$, and $\mfk R$. Therefore, throughout we will assume that those random variables are defined on a common probability space $\Omega$ in order to have a convenient way of stating those results.
\subsection{Main results} 

We let $\bar{F}_N^G = \E F_N^G$, where $\E$ averages over the randomness of $H_N$, $(\chi_i)_i$, and $\mfk R$. The random variable $F_N^G$ concentrates strongly around its expectation $\bar{F}_N^G$ in the large $N$ limit \cite[Theorem~1.2]{pan}. Hence, to describe the limit as $N \to +\infty$ of $F_N^G$, it suffices to describe the limit of $\bar{F}_N^G$. To describe this limit, we need to define, for $q\in\mcl Q_\infty$ and $x\in \R^d$, 
\begin{align}
    &\psi(q;x)  \notag
    \\
    &= -\E\log\iint\exp\Ll(w^q(\alpha)\cdot \tau-\tfrac{1}{2}q(1)\cdot\tau\tau^\intercal+x\cdot h(\tau,\chi_1)\Rr) \d P_1(\tau)\d\mathfrak{R}(\alpha), \label{e.psi(q;x)=}
\end{align}
where $\E$ integrates the randomness of $\mfk R, (\chi_i)_i$ and $H_N^G$ so that $\psi$ is a deterministic function.

\begin{theorem}\label{t.F_N(...)}
Recall that we assume \eqref{e.convexity}. For every continuous function $G : \R^d \to \R$, we have
\begin{align}\label{e.t.gen}
\begin{split}
    &\lim_{N\to\infty} \bar{  F}_N^G
    =\inf_{m}\sup_{x,\, p}\Ll\{\psi\Ll(q+t\nabla \xi(p);x\Rr)-\int_0^1\theta(p(s))\d s + m \cdot x - G(m)\Rr\},
\end{split}
\end{align}
where $\inf$ is taken over $m \in\R^d$ and $\sup$ over $x\in\R^d$ and $p\in\mcl Q_\infty$.
\end{theorem}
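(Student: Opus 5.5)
The plan follows the strategy sketched in the introduction: first treat linear Mattis interactions, then pass to general continuous $G$ through large deviations.

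\textbf{Step 1: linear case.} Take $G(m)=x\cdot m$ with $x\in\R^d$ fixed. Then
\begin{e*}
NG(m_N)=\sum_{i=1}^N x\cdot h(\sigma_{\bullet i},\chi_i)
\end{e*}
is a sum of single-site terms. We absorb it into the reference measure: for each $i$, replace $P_1$ by the random tilted measure $e^{x\cdot h(\tau,\chi_i)}\d P_1(\tau)$. These tilted measures are i.i.d.\ across sites and supported in the unit ball. Under assumption \eqref{e.convexity}, the standard enriched Parisi formula then applies with essentially no modification: Guerra's interpolation gives the upper bound, and the Aizenman--Sims--Starr scheme with Ghirlanda--Guerra identities and Panchenko's synchronization gives the lower bound. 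The only change is that the single-site term is averaged over $\chi_1$. This yields
\begin{e*}
\phi(x):=\lim_{N\to\infty}\bar F_N^{x\cdot}=\sup_{p\in\mcl Q_\infty}\Ll\{\psi(q+t\nabla\xi(p);x)-\int_0^1\theta(p(s))\,\d s\Rr\}.
\end{e*}
Concentration of $F_N^{x\cdot}$ (\cite[Theorem~1.2]{pan}), together with Borel--Cantelli along with continuity in $x$ (the map is Lipschitz since $h$ is bounded), upgrades this to almost sure convergence simultaneously for all $x$.

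\textbf{Step 2: differentiability and Gärtner--Ellis.} Let $\langle\cdot\rangle^0_N$ denote the Gibbs measure with $G=0$. Then
\begin{e*}
\tfrac1N\log\langle e^{N x\cdot m_N}\rangle^0_N = F^0_N-F^{x\cdot}_N \to \phi(0)-\phi(x).
\end{e*}
To apply Gärtner--Ellis, we need $-\phi$ to be differentiable on $\R^d$. This is the main obstacle. It is convex as a limit of convex functions, so it suffices to show it is differentiable. I would first try a Danskin-type argument. For each fixed $p$, the map $x\mapsto\psi(q+t\nabla\xi(p);x)$ is differentiable with explicit gradient. Under the convexity assumption, the Parisi functional admits a unique maximizer $p_x$, by strict concavity in the appropriate parametrisation (Auffinger--Chen type arguments in the vector setting). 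The envelope theorem then gives $\nabla\phi(x)=\nabla_x\psi(q+t\nabla\xi(p_x);x)$, and continuity of $x\mapsto p_x$ follows from uniqueness and compactness. If uniqueness is unavailable in this generality, a fallback is to show that $\langle m_N\rangle$ concentrates for almost every $x$ and identify the derivative through convexity. The $\eps$-perturbation by the Ghirlanda--Guerra terms handles most of this. With differentiability in hand, Gärtner--Ellis yields an LDP for $m_N$ under $\langle\cdot\rangle^0_N$ with good rate function $I=(-\phi)^*$ shifted, namely $I(m)=\sup_x\{x\cdot m+\phi(x)\}-\phi(0)$. Since $h$ is bounded, $m_N$ lives in a compact set, so exponential tightness is automatic.

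\textbf{Step 3: general $G$ via Varadhan.} For continuous $G$, which is bounded on the compact range of $m_N$, Varadhan's lemma gives
\begin{e*}
F_N^0-F_N^G=\tfrac1N\log\langle e^{NG(m_N)}\rangle^0_N\to\sup_m\{G(m)-I(m)\}.
\end{e*}
Hence
\begin{e*}
\lim F_N^G=\phi(0)+\inf_m\{I(m)-G(m)\}=\inf_m\sup_x\{\phi(x)+x\cdot m-G(m)\}.
\end{e*}
Inserting the formula for $\phi$ from Step 1 gives \eqref{e.t.gen}. The convergence holds almost surely, and it passes to the expectations $\bar F_N^G$ by boundedness and concentration.
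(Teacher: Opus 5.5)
Your architecture (linear Mattis field, then G\"artner--Ellis under $\la\cdot\ra^0_N$, then Varadhan's lemma with $F^0_N-F^G_N=\frac1N\log\la e^{NG(m_N)}\ra^0_N$) is exactly the paper's. However, Step~2, which you rightly call the crux, is not established.

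Your main route needs uniqueness of the maximizer $p_x$ of the Parisi functional. No such result is available here. The Auffinger--Chen strict convexity is a scalar statement. For vector spins with $\xi$ convex only on $\S^D_+$ and an arbitrary enrichment path $q$, no parametrisation is known in which $p\mapsto\psi(q+t\nabla\xi(p);x)-t\int_0^1\theta(p(s))\,\d s$ is strictly concave.

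The fallback does not close the gap either. Concentration of $\la m_N\ra$ for almost every $x$ gives differentiability only almost everywhere, which concavity already gives for free. G\"artner--Ellis needs differentiability at every point of $\R^d$. Without it you only get the LDP upper bound, hence only $\liminf_N F^G_N\geq\inf_m\{\phi^*(m)-G(m)\}$. The reverse inequality is exactly where differentiability, i.e.\ convexity of the true rate function under $\la\cdot\ra^0_N$, is needed.

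The missing idea is that uniqueness is unnecessary, because the variational formula itself gives uniform semiconvexity. Write $\msc P(p,x)=\psi(q+t\nabla\xi(p);x)-t\int_0^1\theta(p(s))\,\d s$, so that your $\phi$ (the paper's $\varphi$) equals $\sup_p\msc P(p,\cdot)$.
\begin{itemize}
\item For fixed $p$, $\nabla_x^2\msc P(p,x)$ is minus an averaged Gibbs covariance of $h(\tau,\chi_1)$. It is therefore bounded below by $-C\,\mathrm{Id}$ with $C=\sup|h|^2$, uniformly in $p$.
\item Hence $\phi+\frac C2|\cdot|^2$ is convex, being a supremum of convex functions, while $\phi$ itself is concave.
\item Let $a$ be a supergradient of $\phi$ at $x$ and $b$ a subgradient of $\phi+\frac C2|\cdot|^2$ at $x$. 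Then $(b-Cx)\cdot v-\frac C2|v|^2\leq\phi(x+v)-\phi(x)\leq a\cdot v$ for all $v$, which forces $a=b-Cx$.
\item So the superdifferential is a singleton, $\phi$ is differentiable everywhere, and concavity upgrades this to $C^1$.
\end{itemize}
This is the content of the paper's Lemma~\ref{l.varphi}. There it is phrased with $\eps$-maximizers $p_\eps$ and the bound $|y\cdot a-y\cdot\nabla_x\msc P(p_\eps,x)|\leq Cr+\eps/r$, taking $r=\sqrt\eps$.

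A smaller point on Step~1: Guerra's interpolation by itself needs convexity of $\xi$ on all of $\R^{D\times D}$, since the overlaps $\sigma\sigma'^\intercal/N$ need not lie in $\S^D_+$. Under \eqref{e.convexity} alone, that bound must instead come from the cavity/Hamilton--Jacobi argument of Chen--Mourrat, as the paper indicates.
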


When $G = 0$, \eqref{e.t.gen} boils down to the classical Parisi formula as written in~\cite[Proposition~8.4]{chenmourrat2023cavity}. Note that here and in~\cite[Proposition~8.4]{chenmourrat2023cavity} the convexity of $\xi$ is assumed on $S^D_+$ and not on the whole space $\R^{D \times D}$. This weaker convexity assumption should not be a surprise. Indeed, in the $D = 1$ case, it can be seen that the Parisi formula still holds assuming only that $\xi$ is convex on $\R_+$ as a consequence of Talagrand's positivity principle, and this weaker convexity assumption is simply a consequence of a generalization of this observation (but with a different proof).

Throughout the paper, the free energy of models with linear Mattis interaction will play a special role. So for every $x \in \R^d$, we let 
\begin{e} \label{e.def.phi}
     \varphi(x) = \lim_{N \to +\infty} \bar{ F}_N^{m \mapsto x \cdot m},
\end{e}
and we let $\varphi^*(m) = \sup_{x} \left\{ x \cdot m + \varphi(x) \right\}$ denote the convex dual of $-\varphi$.
\begin{definition}[Large deviation principle]
    Let $(X_N)_{N \geq 1}$ be a sequence of random variables in $\R^d$ with associated probability measures $(\P_N)_{N \geq 1}$ and let $I:\R^d\to [0,\infty]$. We say that the random variables $X_N$ satisfy a large deviation principle under $\P_N$ with rate function $I$ when for every Borel measurable set $A \subset \R^d$, we have  
    \begin{align*}
        -\inf_{ A^\circ} I\leq \liminf_{N\to\infty}\frac{1}{N}\log \P_N\{X_N\in A\} \leq \limsup_{N\to\infty}\frac{1}{N}\log \P_N\{X_N\in A\} \leq -\inf_{\bar A} I,
    \end{align*}
    where $A^\circ$ and $\bar A$ are the interior and the closure of $A$, respectively.
\end{definition}

In the following, we refer to the large deviation principle as LDP.

Let $\la\cdot\ra^{G}_{N}$ be the probability measure defined by 
\begin{equation*} 
    \la f(\sigma) \ra^{G}_{N} \propto \int f(\sigma) e^{H^G_N(\sigma)} \d P_1^{\otimes N}(\sigma).
\end{equation*}
Observe that $\la\cdot\ra^{G}_{N}$ is a \emph{random} probability measure. Recall that the random variables $(H_N)_{N \geq 1}$ and $(\chi_i)_{i \in \N}$ are defined on a common probability space $\Omega$. For a random variable $X : \Omega \to \mfk X$, we let $X^\omega \in \mfk X$ denote the realization of the random variable $X$ on the event $\omega \in \Omega$. Recall the mean magnetization $m_N$ defined in \eqref{e.m_N=} and $\varphi$ defined in \eqref{e.def.phi}. Our next main result is a (quenched) large deviation principle for the random variables $m_N$ under $\la\cdot\ra^{G}_{N}$.


\begin{theorem}[LDP for the magnetization]\label{t.LDP_gen}
    Recall that we assume \eqref{e.convexity}. There exists a full-measure subset $\Omega' \subset \Omega$ such that, for every continuous function $G : \R^d \to \R$ and every $\omega \in \Omega'$, the random variables $m_N$ satisfy a large deviation principle under $\la\cdot\ra^{G,\omega}_{N}$ with rate function $I^G$ defined by 
    \begin{equation*} 
        I^G(m) = -G(m) + \varphi^*(m) +\sup_{m'\in\R^d}\Ll\{G(m') - \varphi^*(m')\Rr\},\quad\forall m \in\R^d.
    \end{equation*}
\end{theorem}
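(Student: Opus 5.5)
The plan follows the strategy outlined in the introduction: first establish the LDP for $G=0$ via Gärtner--Ellis, then tilt by $e^{NG(m_N)}$ using Varadhan's lemma and Bryc's inverse Varadhan lemma.

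\emph{Step 1: the log-moment generating function.} For $x\in\R^d$, observe that
\begin{e*}
\frac1N\log\la e^{N x\cdot m_N}\ra^0_N = -F_N^{m\mapsto x\cdot m}+F_N^0 .
\end{e*}
By concentration of $F_N^G$ around $\bar F_N^G$ (\cite[Theorem~1.2]{pan}) and Borel--Cantelli, this converges almost surely to $\Lambda(x):=-\varphi(x)+\varphi(0)$ for each fixed $x$. The almost-sure set must not depend on $x$. To arrange this, take the countable set $x\in\Q^d$ and intersect the corresponding full-measure events. Then extend to all $x$ using the fact that $x\mapsto \frac1N\log\la e^{Nx\cdot m_N}\ra^0_N$ is convex and $\|h\|_\infty$-Lipschitz, uniformly in $N$, since $|m_N|\le \|h\|_\infty$. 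Call the resulting full-measure set $\Omega'$; note that it depends on neither $G$ nor $x$.

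\emph{Step 2: differentiability of $\varphi$.} This is the main obstacle. Gärtner--Ellis requires $\Lambda$ to be differentiable on all of $\R^d$; it is automatically finite, convex and Lipschitz. I would use Theorem~\ref{t.F_N(...)} with $G(m)=x\cdot m$. The infimum over $m$ forces the inner $x'$ to equal $x$, giving
\begin{e*}
\varphi(x)=\sup_{p\in\mcl Q_\infty}\Ll\{\psi(q+t\nabla\xi(p);x)-\int_0^1\theta(p(s))\,\d s\Rr\}.
\end{e*}
By a standard envelope (Danskin-type) argument, $-\varphi$ is differentiable at $x$ provided the map $x\mapsto\psi(q+t\nabla\xi(p);x)$ is differentiable for every maximizer $p$, and its $x$-derivative does not depend on which maximizer is chosen.

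Differentiability of $\psi$ in $x$ is clear: $\psi$ is smooth, concave, and has derivative $-\E\langle h(\tau,\chi_1)\rangle$. Independence from the maximizer is the delicate point. My first attempt would exploit convexity: since $-\varphi$ is convex, it suffices to show that its left and right directional derivatives coincide. I would do this by proving that the Parisi functional has a unique maximizer in $p$, using the strict convexity arguments available under \eqref{e.convexity} (as in the Auffinger--Chen uniqueness of the Parisi measure, adapted to the vector setting via \cite{chenmourrat2023cavity}). As an alternative, one can argue directly at finite $N$: derivatives of convex functions converge at points where the limit is differentiable, and a Gaussian-perturbation/concentration argument for $\la m_N\ra$ then shows that the one-sided derivatives agree.

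\emph{Step 3: conclusion.} Once $\Lambda$ is differentiable everywhere, Gärtner--Ellis gives, for $\omega\in\Omega'$, an LDP for $m_N$ under $\la\cdot\ra^{0,\omega}_N$ with good rate function
\begin{e*}
\Lambda^*(m)=\varphi^*(m)-\varphi(0).
\end{e*}
Goodness and exponential tightness are immediate, since $m_N$ lies in a compact set. For a continuous $G$ the tilting function is bounded on this compact set. By Varadhan's lemma,
\begin{e*}
\frac1N\log\la e^{NG(m_N)}\ra^0_N\to\sup_{m}\{G(m)-\Lambda^*(m)\}.
\end{e*}
The tilted-measure LDP, which is the standard corollary obtained by combining Varadhan's lemma with the LDP upper and lower bounds, then yields an LDP under $\la\cdot\ra^{G,\omega}_N$ with rate function
\begin{e*}
\Lambda^*-G+\sup\{G-\Lambda^*\}.
\end{e*}
The constant $\varphi(0)$ cancels, giving exactly $I^G$. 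Because $\Omega'$ was built independently of $G$, the result holds simultaneously for all continuous $G$.
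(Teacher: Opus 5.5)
Your Steps~1 and~3 are sound and essentially match the paper; where you use the tilted-LDP corollary of Varadhan's lemma, the paper uses Bryc's inverse Varadhan lemma, which is equivalent here. The gap is Step~2. You correctly identify differentiability of $\varphi$ as the crux, but neither of your routes establishes it.

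\emph{The Danskin route.} It needs a maximizer $p$ to exist, with enough compactness and continuity on the non-compact set $\mcl Q_\infty$ to run an envelope argument. You do not address this. It then needs that maximizer to be unique, and this is not available. Auffinger--Chen strict convexity concerns scalar spins and is stated in the measure (CDF) parametrization, not in the quantile path $p$. Nothing comparable is known for general vector spins with arbitrary $P_1$ and a random field, and convexity of $\xi$ on $\S^D_+$ does not give strict concavity of $p\mapsto\msc P(p,x)$, where $\msc P(p,x)=\psi(q+t\nabla\xi(p);x)-t\int_0^1\theta(p(s))\,\d s$.

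\emph{The finite-$N$ fallback.} This is not an argument. Convergence of derivatives at points where the limit is differentiable presupposes the conclusion. Gibbs concentration of $m_N$, which a perturbation argument yields only for almost every parameter, says nothing about whether $\lim_N\E\la m_N\ra$ jumps at a given $x$.

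The missing idea is that neither existence nor uniqueness of maximizers is needed. You already know $\varphi$ is concave, since its finite-$N$ Hessian is minus a Gibbs covariance. The $x$-Hessian of $\msc P(p,\cdot)$ is minus an averaged one-spin Gibbs covariance of the bounded $h$, so it is bounded uniformly in $p$. Hence $\varphi=\sup_p\msc P(p,\cdot)$ is both concave and semiconvex, and therefore differentiable.

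Concretely, as in Lemma~\ref{l.varphi}: take a supergradient $a$ of $\varphi$ at $x$ and an $\eps$-maximizer $p_\eps$. Comparing $\varphi(x\pm ry)-\varphi(x)$ with $\msc P(p_\eps,x\pm ry)-\msc P(p_\eps,x)$ and Taylor expanding gives $|y\cdot a-y\cdot\nabla_x\msc P(p_\eps,x)|\le Cr+\eps/r$. Choosing $r=\sqrt\eps$ pins $a$ down.

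Even in your own framing, independence from the choice of maximizer is automatic. If a maximizer $p$ exists, the smooth function $\msc P(p,\cdot)$ touches the concave $\varphi$ from below at $x$. This forces every supergradient to equal $\nabla_x\msc P(p,x)$.

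Finally, take the formula $\varphi=\sup_p\msc P(p,\cdot)$ from the linear-field Parisi formula, Proposition~\ref{p.F_N(q,x)}, not from Theorem~\ref{t.F_N(...)}. There are two problems with using the theorem. First, the paper derives Theorem~\ref{t.F_N(...)} from this very LDP argument, so invoking it is circular. Second, by itself it only identifies $\varphi$ with the concave envelope of $\sup_p\msc P(p,\cdot)$. Your claim that the infimum over $m$ forces $x'=x$ therefore presupposes concavity of that supremum, which you have not established independently.
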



\subsection{Recovering concrete models from the general setting} \label{ss.usual}

The main results of this paper are stated for very general models as introduced in Section~\ref{ss.setting}. Let us explain how to recover results about more usual models from those.  

Given $h:\R^D\times \R^L\to\R^d$ in~\eqref{e.F_N(beta)}, we define $\tilde h:\R^D\times \R^L\to \R^d\times \S^D$ through
\begin{e*}
    \tilde h(\tau,\chi) = \Ll(h(\tau,\chi),\ \tau\tau^\intercal\Rr).
\end{e*}
Henceforth, we isometrically identify $\R^d\times \S^D$ with $\R^{\tilde d}$ for some $\tilde d \in\N$. Given $G:\R^d\to\R$ in~\eqref{e.F_N(beta)}, we introduce $\tilde G:\R^d\times \S^D\to\R$ given by
\begin{e*}
    \tilde G(m,r) =G(m) + \frac{\beta^2}{2}\xi(r).
\end{e*}
Then, we consider $H_N^G$ in~\eqref{e.H^q,x_N(sigma,alpha,chi)=} with $G,h,d$, therein substituted with $\tilde G,\tilde h,\tilde d$ introduced above. Observe that setting $t = \beta^2/2$ and $q = 0$, we have  
\begin{align*}
    H_N^{\tilde G}(\sigma) &= \beta H_N(\sigma) + N \tilde G \left( \frac{1}{N} \sum_{i = 1}^N \tilde h(\sigma_i, \chi_i) \right) - \frac{N \beta^2}{2} \xi \left( \frac{\sigma \sigma^\intercal}{N}\right) \\
    &= \beta H_N(\sigma) + N G \left( \frac{1}{N} \sum_{i  = 1}^N h(\sigma_{\bullet i}, \chi_i) \right).
\end{align*}
In particular, observe that the model described in \eqref{e.simple} is recovered by choosing $D=1$, $d=1$, $L =1$, $h(\sigma,\chi) = \text{sgn}(\sigma \chi)$, $G(m) = m^2$, $\xi(x) = x^2$, and $P_1$ is the uniform measure on $\{-1,1\}$.
In particular observe that thanks to this, Theorem~\ref{t.parisi+ldp.basic} can be recovered from Theorem~\ref{t.F_N(...)} and Theorem~\ref{t.LDP_gen}.

\subsection{Organization of the paper}

In Section~\ref{s.free energy with linear mattis}, we compute the limit free energy $\lim_{N \to +\infty} \bar{F}_N^G$ in the special case where $G$ is of the form $G(m) = x \cdot m$ for some $x \in \R^d$ and prove that this quantity is a continuously differentiable function of $x$. 
In Section~\ref{s.LDP2}, we use the results of the previous section to give a proof of Theorem~\ref{t.F_N(...)} and Theorem~\ref{t.LDP_gen} by relying only on classical large deviation tools.

\subsection{Acknowledgements}

We would like to warmly thank Jean-Christophe Mourrat for many useful inputs and interesting discussions during the conception and writing of this paper. HBC acknowledges funding from the NYU Shanghai Start-Up Fund and support from the NYU–ECNU Institute of Mathematical Sciences at NYU Shanghai.

\section{Identification of the limit free energy with linear Mattis interaction} \label{s.free energy with linear mattis}

The plan to prove Theorem~\ref{t.F_N(...)} is divided into two steps. First, we consider the free energy with $G(m) = x \cdot m$, namely, $ F_N^{m \mapsto x \cdot m}$. In this case, we can think of $ F_N^{m \mapsto x \cdot m}$ as adding some linear random external field to $F_N^0$. For $F_N^{m \mapsto x \cdot m}$, since the linear field decouples easily in the cavity computation, the standard set of tools is effective with minimal modifications. Thus, the following result holds.

\begin{proposition}\label{p.F_N(q,x)}
Recall that we assume \eqref{e.convexity}, for every $x\in\R^d$ we have
\begin{align}\label{e.p.F_N(q,x)}
   \lim_{N\to\infty} \bar{ F}_N^{m \mapsto x \cdot m} = \sup_{p\in\mcl Q_\infty}\Ll\{\psi(q+t\nabla \xi(p);x)-t\int_0^1\theta(p(s))\d s\Rr\}.
\end{align}
\end{proposition}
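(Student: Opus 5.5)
The plan is to reduce Proposition~\ref{p.F_N(q,x)} to the classical enriched Parisi formula for vector spin glasses, \cite[Proposition~8.4]{chenmourrat2023cavity}. The key observation is that the linear Mattis term $N x\cdot m_N = \sum_{i=1}^N x\cdot h(\sigma_{\bullet i},\chi_i)$ is a sum of single-site terms. It can therefore be absorbed into the reference measure of each spin, except that this measure now depends on the random label $\chi_i$.

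Concretely, I would work conditionally on $(\chi_i)_i$ and view the system as a model with \emph{site-dependent} reference measures
\begin{e*}
\d P_1^{\chi_i,x}(\tau)=e^{x\cdot h(\tau,\chi_i)}\d P_1(\tau).
\end{e*}
These are finite positive measures supported in the unit ball, since $h$ is bounded, and their supports still span $\R^D$. The proof of the Parisi formula in \cite{chenmourrat2023cavity} has two halves.
\begin{enumerate}
\item The upper bound for $\bar F_N$ (in the sign convention where the Parisi functional is a lower bound on $-\log Z$) comes from Guerra's interpolation. Here the convexity assumption \eqref{e.convexity} on $\S^D_+$ is used, together with the fact that overlaps $\sigma\sigma'^\intercal/N$ and $q$ are positive semidefinite in the relevant synchronized sense.
\item The matching bound comes from the Aizenman--Sims--Starr cavity scheme combined with the Ghirlanda--Guerra identities and synchronization.
\end{enumerate}
Both halves are written spin-by-spin. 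In each half, the only place the single-spin measure enters is a one-site functional of the form $-\E\log\iint \exp(w(\alpha)\cdot\tau-\tfrac12 q(1)\cdot\tau\tau^\intercal)\d P(\tau)\d\fR(\alpha)$. With i.i.d.\ labels $\chi_i$, averaging this over the cavity site produces exactly $\psi(\cdot\,;x)$ from \eqref{e.psi(q;x)=}.

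I would then check the steps one at a time.
\begin{enumerate}
\item \emph{Guerra interpolation.} The time derivative only involves Gaussian covariances and overlaps, so the $\chi$-dependent tilt is inert and yields
\begin{e*}
\bar F_N^{m\mapsto x\cdot m}\geq \psi(q+t\nabla\xi(p);x)-t\int_0^1\theta(p)
\end{e*}
for each $p$, up to sign conventions, after decoupling the sites. Since the $\chi_i$ are i.i.d., $\E$ of the sum over sites is $N$ times the single-site quantity.
\item \emph{Cavity step.} Adding one spin with fresh label $\chi_{N+1}$ again contributes a factor $e^{x\cdot h(\sigma_{\bullet N+1},\chi_{N+1})}$. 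This factor is absorbed into the cavity-site measure, so the Aizenman--Sims--Starr representation gives $\liminf$ bounds in terms of the same functional evaluated at the limiting overlap distribution.
\item \emph{Perturbation.} The Ghirlanda--Guerra and synchronization perturbation is a small Gaussian term independent of $\chi$. It is unaffected by the tilt, and the standard argument yields the ultrametric, synchronized structure that identifies the limit overlap with some $p\in\mcl Q_\infty$.
\end{enumerate}

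The main obstacle I expect is bookkeeping in the cavity step. One must make sure that the random, site-dependent tilts do not break the exchangeability and the invariance arguments behind synchronization, namely the self-overlap concentration via the $\tau\tau^\intercal$ perturbation. To handle this, I would include $\chi$ as part of the quenched disorder and note that the Gibbs measure remains exchangeable in distribution over sites, because $(\chi_i)$ is i.i.d. and independent of $H_N$. This is all that the Panchenko-type arguments require.

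Finally, to pass to the supremum over $p$, I would use continuity of $p\mapsto\psi(q+t\nabla\xi(p);x)-t\int\theta(p)$ in $L^1$. The $x$-dependence is harmless here since $|x\cdot h|$ is bounded.
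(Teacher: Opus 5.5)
There is a genuine gap in the half of your argument that uses the convexity hypothesis. This is the bound $\bar F_N^{m\mapsto x\cdot m}\geq \psi(q+t\nabla\xi(p);x)-t\int_0^1\theta(p(s))\,\d s$, which you obtain from Guerra's interpolation.

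By the definition of $\theta$, the derivative of the interpolation is proportional to $\E\la \xi(R)-\xi(b)-\nabla\xi(b)\cdot(R-b)\ra_r$, where $R=\sigma\sigma'^\intercal/N$ and $b=p(\alpha\wedge\alpha')$. When $\xi$ is only convex on $\S^D_+$, this integrand is guaranteed nonnegative only if $R\in\S^D_+$. Under the finite-$N$ interpolating Gibbs measure $\la\cdot\ra_r$, there is no a priori reason for $R$ to be symmetric, let alone positive semidefinite. Already for $D=1$, getting Guerra's bound with $\xi$ convex only on $\R_+$ requires Talagrand's positivity principle, which needs an extra perturbation and the Ghirlanda--Guerra identities.

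Your appeal to overlaps being PSD ``in the relevant synchronized sense'' does not fill this gap. Synchronization is an asymptotic statement about perturbed Gibbs measures that satisfy the generalized Ghirlanda--Guerra identities and self-overlap concentration. Upgrading it to an approximate positivity statement for $R$, uniform along $r\in[0,1]$ and with the cascade present, would be a vector-valued positivity principle that you neither state nor prove. It is also not how \cite{chenmourrat2023cavity} proceeds. So the step you propose to re-check is not the one that actually carries the weaker hypothesis.

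The paper uses Guerra's interpolation (with the linear field decoupling site by site, as in your Step~1) only when $\xi$ is convex on all of $\R^{D\times D}$. In that case $\xi(a)-\nabla\xi(b)\cdot a+\theta(b)\geq0$ for every $a,b$. When $\xi$ is convex only on $\S^D_+$ and $D>1$, the paper instead checks that \cite[Proposition~8.1]{chenmourrat2023cavity} survives the random field $Nx\cdot m_N$. There, Guerra's bound is replaced by the Hamilton--Jacobi bound of \cite{mourrat2020free}, with initial condition $q\mapsto\psi(q;x)$, whose variational solution is the right-hand side of \eqref{e.p.F_N(q,x)}. This is combined with the cavity computations.

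To repair your proposal, replace your Step~1 by that Hamilton--Jacobi argument adapted to the tilted site measures, or else assume convexity on $\R^{D\times D}$. The rest of your plan matches the paper's sketch: absorbing $e^{x\cdot h(\tau,\chi_i)}$ into the single-site measures, using $(N+1)x\cdot m_{N+1}=Nx\cdot m_N+x\cdot h(\tau,\chi_{N+1})$ in the cavity step, and using $\chi$-independent perturbations.
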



Observe that at $x = 0$, \eqref{e.p.F_N(q,x)} reduces to the classical Parisi formula as stated in \cite{chenmourrat2023cavity}. At $x \neq 0$ the proof \eqref{e.p.F_N(q,x)} follows the same arguments than for the classical Parisi formula. The proof is done in two separate steps, first to prove that the limit free energy is lower-bounded by the right-hand side in \eqref{e.p.F_N(q,x)}, one can write a variant of the so-called of the Aizenman–Sims–Starr scheme. Roughly speaking, the principle of the computation is comparing $\bar{ F}_{N+1}^{m \mapsto x \cdot m}$ with $\bar{ F}_N^{m \mapsto x \cdot m}$ by ``integrating out'' one of the spin variables. This leads to expressions of the form 
\begin{e*}
    (N+1)\bar{ F}^{m \mapsto x \cdot m}_{N+1} - N\bar{ F}^{m \mapsto x \cdot m}_{N} \simeq \psi(q+t\nabla \xi(p);x)-t\int_0^1\theta(p(s))\d s,
\end{e*}
with some $p \in \mcl Q_\infty$ possibly depending on $N$, which yields the desired bound. The second step of the proof consists in verifying that the limit free energy is upper-bounded by the right-hand side in \eqref{e.p.F_N(q,x)}. To proceed, it is possible to use Gaussian interpolation as in Guerra's original proof of this bound for the classical Parisi formula \cite{gue03}. Alternatively, one can also use the more modern point of view in~\cite{mourrat2020nonconvex,mourrat2020free} which consists in verifying that the free energy at finite $N$ is, as a function of $(t,q)$, a supersolution of a partial differential equation of Hamilton--Jacobi type with time parameter $t$ and initial condition $q \mapsto \psi(q;x)$. The comparison principle then yields the desired bound.

\begin{proof}[Sketch of proof]
Since the external field $Nx\cdot m_N$ is linear, the usual strategy --- Guerra's interpolation for the upper bound and the Aizenman--Sims--Starr scheme together with Panchenko's ultrametricity for the lower bound --- works without any significant modification. For completeness, we still sketch the key steps in the said strategy to highlight where the convexity is needed and where the linearity of the external field is used. 
After these are clarified, we refer to~\cite{chen2023self} for the detail for the lower and upper bounds below (but in the setting of vector spin glasses without random external field).
For brevity, let us write $\bar{  F}_N=\bar{  F}_N^{m\mapsto x\cdot m}$.

\smallskip

\textit{Lower bound via Guerra's interpolation~\cite{gue03}.}
Let us display the dependence on $t$ in the Hamiltonian in~\eqref{e.H^q,x_N(sigma,alpha,chi)=} by writing  $H^G_N(\sigma,\alpha;t)$. Fix any $p\in\mcl Q_\infty$ and recall $\theta$ from~\eqref{e.theta=}. We consider an independent Gaussian field $(V(\alpha))_{\alpha\in\supp\fR}$ with covariance
\begin{align*}
    \E \Ll[V(\alpha)V(\alpha')\Rr] = \theta\Ll(p(\alpha\wedge\alpha')\Rr).
\end{align*}
We also take an i.i.d.\ sequence of Gaussian field $(w^{t\nabla\xi(p)}_i(\alpha))_{\alpha\in \supp\fR}$ indexed by $i\in\N$ with covariance  given as in~\eqref{e.Ew^qw^q=} with $q$ therein replaced by $t\nabla\xi(p)$.
For $r\in[0,1]$, we consider the interpolating Hamiltonian
\begin{align*}
    H_N(\sigma,\alpha;r) = H^G_N(\sigma,\alpha;rt)+\sqrt{1-r}\sum_{i=1}^Nw^{t\nabla\xi(p)}_i(\alpha)\cdot \sigma_{\bullet i} - \tfrac{1-r}{2}t\nabla\xi(p(1))\cdot\sigma\sigma^\intercal
    \\
    +\sqrt{rNt}V(\alpha) - \tfrac{rNt}{2}\theta(p(1)).
\end{align*}
With this Hamiltonian, we define the interpolating free energy
\begin{align*}
    \phi(r) = -\frac{1}{N}\E\log\iint\exp\Ll(H_N(\sigma,\alpha;r)\Rr)\d P_1^{\otimes N}(\sigma)\d \fR(\alpha).
\end{align*}
Then, we have
\begin{align*}
    \phi(1)= \bar{ F}_N - \frac{1}{N}\E\log\int\exp\Ll(\sqrt{Nt}V(\alpha)-\tfrac{Nt}{2}\theta(p(1))\Rr)\d\fR(\alpha),
\end{align*}
where the term $\frac{1}{N}\E\log \int \cdots\, \d \fR(\alpha)$ can be computed to be $-t\int_0^1\theta(p(s))\d s$ appearing in~\eqref{e.p.F_N(q,x)}. 
Here, we used the independence between the Gaussian randomness of $V(\alpha)$ and that of $\sum_{i=1}^N w_i^q(\alpha)\cdot\sigma_{\bullet i}$ in $H^G_N(\sigma,\alpha;rt)$ together with some property of the cascade to decompose $\phi(1)$ into the two terms as in the above display.
On the other hand, using $w_i^q(\alpha) + w_i^{t\nabla\xi(p)}(\alpha)\stackrel{\d}{=}w_i^{q+t\nabla\xi(p)}(\alpha)$, we have
\begin{align*}
    \phi(0)=-\frac{1}{N}\E \log\iint\exp\Big(\sum_{i=1}^Nx\cdot h(\sigma_{\bullet i},\chi_i) + \sum_{i=1}^N w^{q+t\nabla\xi(p)}_i(\alpha)\cdot \sigma_{\bullet i} 
    \\
    -\tfrac{1}{2} \big(q+t\nabla\xi(p)\big)(1)\cdot\sigma\sigma^\intercal\Big)\d P_1^{\otimes N}(\sigma)\d\fR(\alpha),
\end{align*}
which is equal to $\psi(q;x)$ in~\eqref{e.psi(q;x)=} using the independence of random variables indexed by $i$. Notice that the linearity of the external field was used here. Then, the upper bound in~\eqref{e.p.F_N(q,x)} follows once $\phi(1)\geq \phi(0)$. To show this, we compute the derivative
\begin{align}\label{e.d/drphi=}
    \frac{\d}{\d r}\phi(r)=\frac{1}{2}\E\la\xi(\sigma\sigma'^\intercal)-\nabla\xi(p(\alpha\wedge\alpha'))\cdot \sigma\sigma'^\intercal+\theta(p(\alpha\wedge\alpha'))\ra_r,
\end{align}
where we used the standard tool of Gaussian integration by parts and $\la\cdot\ra_r$ is the Gibbs measure associated with $H_N(\sigma,\alpha;r)$. 
Notice that in the above display there are no terms involving self-overlaps
$\sigma \sigma^{\intercal}$ or $\alpha \wedge \alpha = 1$, which might otherwise
arise from Gaussian integration by parts. This is because such terms are exactly
canceled by the derivatives of $-\frac{rNt}{2}\xi(\frac{\sigma\sigma^\intercal}{N})$, $- \tfrac{1-r}{2}t\nabla\xi(p(1))\cdot\sigma\sigma^\intercal$, and $- \tfrac{rNt}{2}\theta(p(1))$ appearing in the Hamiltonian.
The definition of $\theta$ in~\eqref{e.theta=} and the convexity of $\xi$ on $\R^{D\times D}$ ensures that $\xi(a)- \nabla\xi(b)\cdot a -\theta(b)\geq 0$ for every $a,b\in\R^{D\times D}$. Therefore, we get $\frac{\d}{\d r}\phi(r)\geq 0$ and completes the proof of the lower bound.

\smallskip

\textit{Upper bound via Aizenman--Sims--Starr scheme~\cite{aizenman2003extended} and Panchenko's ultrametricity~\cite{pan.aom}.}
Let $(Z(\sigma))_{\sigma\in \R^{D\times N}}$ and $(Y(\sigma))_{\sigma \in \R^{D\times N}}$ be independent centered $\R^{D}$-valued and real-valued Gaussian processes with covariances
\begin{align*}
    \E \Ll[Z(\sigma)Z(\sigma')^\intercal\Rr] = \nabla\xi\Ll(\tfrac{\sigma\sigma'^\intercal}{N}\Rr)\quad\text{and}\quad \E \Ll[Y(\sigma)Y(\sigma')\Rr] =\theta\Ll(\tfrac{\sigma\sigma'^\intercal}{N}\Rr).
\end{align*}
We can compute that
\begin{align*}
    (N+1)\bar F_{N+1}- N\bar F_{N} = A_N^{(1)} - A_N^{(2)} + o(1),
\end{align*}
with an error term $o(1)$ that vanishes as $N\to\infty$.
Here,
\begin{align*}
    A_N^{(1)} = -\E \log\bigg\langle \int \exp\Big(\Ll(\sqrt{2t}Z(\sigma)+w^q_{N+1}(\alpha)\Rr)\cdot \tau - \tfrac{1}{2}\Ll(2t\nabla\xi\Ll(\tfrac{\sigma\sigma^\intercal}{N}\Rr)+q(1)\Rr)\cdot \tau\tau^\intercal
    \\
    +h(\tau,\chi_{N+1})\Big)  \d P_1(\tau)\bigg\rangle_N 
    \quad \text{and}\quad A_N^{(2)}=-\E \log \la \exp\Ll(\sqrt{2t}Y(\sigma)
    - t\theta\Ll(\tfrac{\sigma\sigma^\intercal}{N}\Rr)\Rr) \ra_N,
\end{align*}
where $\tau=\sigma_{\bullet N+1}$ is called the cavity spin and $\la\cdot\ra_N$ is the Gibbs measure associated with $\bar F_N$. Notice that $w^q_{N+1}(\alpha)$ and $h(\tau,\chi_{N+1})$ have randomness independent from that of $\la\cdot\ra_N$. It is important to note that we used the linearity $(N+1)x\cdot m_{N+1} = Nx\cdot m_N + h(\tau,\chi_{N+1})$ in this computation, where $Nx\cdot m_N$ is hidden in $\la\cdot\ra_N$ and $h(\tau,\chi_{N+1})$ is displayed in the above.

We can see $\limsup_{N\to\infty} \bar F_N\leq \limsup_{N\to\infty}A^{(1)}_N - A^{(2)}_N$. Heuristically, by passing  to a subsequence, we may assume that the distribution of the spin overlap $\frac{\sigma\sigma'^\intercal}{N}$ under $\E\la\cdot\ra_N$ converges to that of $p(U)$ for some $p\in\mcl Q_\infty$ where $U$ is a uniform random variable over $[0,1]$. Also, we can \textit{synchronize} the spin overlap with the cascade overlap of $\alpha$ as spelled out in~\cite[Theorem~4]{pan.vec}. Panchenko's ultrametricity result along with Ghirlanda--Guerra's identities tells us that this is enough to identify the limit of $A^{(1)}_N$ and $A^{(2)}_N$, which are $\psi(q+t\nabla \xi(p);x)$ and $t\int_0^1\theta(p(s))\d s$, respectively. 

This completes the lower bound modulo many technical results. For instance, we need to add some perturbation to $\la\cdot\ra_N$ to ensure that the above sketch works.

\textit{Relaxation of convexity of $\xi$.}
In view of the above proof, the convexity of $\xi$ on $\mathbb{R}^{D \times D}$
is only used to ensure that the right-hand side of~\eqref{e.d/drphi=} is
non-positive.  
When $D=1$, Talagrand's positivity principle allows one to relax this assumption
and require $\xi$ to be convex only on $\mathbb{R}_{+}$.  
When $D>1$, one can verify that~\cite[Proposition~8.1]{chenmourrat2023cavity}
remains valid in the presence of the random external field $N x \cdot m_N$,
which yields Proposition~\ref{p.F_N(q,x)} with $\xi$ assumed to be convex only
on $S^{D}_{+}$. 

Indeed, the proof of~\cite[Proposition~8.1]{chenmourrat2023cavity} is based on cavity
computations (similar to those used in the above proof of the upper bound) developed
in~\cite{chenmourrat2023cavity}, together with an input from~\cite{mourrat2020free}
that provides an lower bound (in place of Guerra's bound) via the solution of a
relevant Hamilton--Jacobi equation. The variational representation of this solution
is precisely the right-hand side of~\eqref{e.p.F_N(q,x)}. In both~\cite{chenmourrat2023cavity} and~\cite{mourrat2020free}, the analysis is
carried out for vector spin glass models without a random external field. However,
as illustrated in the sketch above, the inclusion of such a field does not require
any substantial modification of the arguments.

\end{proof}

Recall that in \eqref{e.def.phi} we have defined $\varphi(x) = \lim_{N\to\infty} \bar{ F}^{m \mapsto x \cdot m}_N$. It follows from Proposition~\ref{p.F_N(q,x)} that 
\begin{equation} \label{e.varphi=sup}
    \varphi(x) = \sup_{p\in\mcl Q_\infty}\Ll\{\psi(q+t\nabla \xi(p);x)-t\int_0^1\theta(p(s))\d s\Rr\}.
\end{equation}
As explained in the following lemma, $\varphi$ is a well-behaved function of $x$. This follows from similar arguments as the ones for~\cite[Lemma~2.2]{chen2024conventional}.
%
\begin{lemma}\label{l.varphi}
The function $\varphi$ is Lipschitz, concave, and continuously differentiable.
\end{lemma}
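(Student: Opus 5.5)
Lipschitz continuity and concavity will come from the finite-$N$ free energies and then pass to the limit; differentiability will come from the variational formula \eqref{e.varphi=sup}, via an envelope-type argument.

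\emph{Lipschitz and concave.} For fixed $N$, the map $x\mapsto \bar F_N^{m\mapsto x\cdot m}$ is $-\frac1N\E\log$ of an integral of $\exp(Nx\cdot m_N+\cdots)$. By Hölder's inequality, the log-partition function is convex in $x$, so $\bar F_N^{m\mapsto x\cdot m}$ is concave. Its gradient is $-\E\langle m_N\rangle$, which has norm at most $\|h\|_\infty$ because $h$ is bounded. Both properties survive pointwise limits, and the limit exists by Proposition~\ref{p.F_N(q,x)}. Hence $\varphi$ is concave and $\|h\|_\infty$-Lipschitz.

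\emph{Differentiability.} A concave function on $\R^d$ is continuously differentiable as soon as it is differentiable everywhere, since for concave functions pointwise differentiability implies continuity of the gradient. So it suffices to show that the superdifferential of $\varphi$ at each $x_0$ is a singleton.

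\begin{enumerate}[label=(\roman*)]
\item \emph{Attainment of the supremum.} I will first show that the supremum over $p$ in \eqref{e.varphi=sup} is attained. The map $p\mapsto \psi(q+t\nabla\xi(p);x)-t\int_0^1\theta(p)$ is upper semicontinuous in a suitable topology (for instance $L^1$). Maximizers can be restricted to paths bounded by a constant depending only on $\xi$ and $P_1$, since spins lie in the unit ball and self-overlaps are bounded. Bounded monotone paths are compact by Helly's selection theorem. So a maximizer $p_{x_0}$ exists.

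\item \emph{Smoothness of the functional in $x$.} For each fixed $p$, the functional is differentiable in $x$ with gradient $g_p(x)=-\E\langle h(\tau,\chi_1)\rangle_{p,x}$. This is a one-site Gibbs average over $\tau$ and the cascade; the derivative can be taken under $\E\log$ because $h$ is bounded. Since $\varphi(x)\geq \psi(\cdot;x)-\cdots$ with equality at $x_0$ for $p=p_{x_0}$, the function $x\mapsto \psi(q+t\nabla\xi(p_{x_0});x)-t\int\theta(p_{x_0})$ touches $\varphi$ from below at $x_0$ and is differentiable there. A concave function touched from below at $x_0$ by a function differentiable at $x_0$ is itself differentiable there: its superdifferential must be contained in the gradient of the touching function.
\end{enumerate}

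This is the Danskin-type argument used for \cite[Lemma~2.2]{chen2024conventional}. Note that uniqueness of the maximizer is not needed: any single maximizer gives a lower touching function. So differentiability reduces to attainment plus differentiability in $x$ for each fixed $p$.

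\emph{Main obstacle.} The main difficulty is step (i): compactness of the set of candidate maximizers and upper semicontinuity of $p\mapsto\psi(q+t\nabla\xi(p);x)$. This needs continuity of $\psi$ in its path argument, which I would take from the Lipschitz estimate of $\psi$ in $L^1$ for cascade functionals available in \cite{chenmourrat2023cavity}. It also needs an a priori bound showing that the supremum may be restricted to $p$ with $p(1)$ bounded. For this I would use the convexity of $\xi$ on $\S^D_+$: it gives $\theta(p)\geq\theta(0)$ with growth, while $\psi$ grows at most linearly in $p$ because $|\tau|\leq1$. This should confine near-maximizers to a bounded set.

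If attainment fails, the fallback is the following. Take a near-maximizing sequence $p_n$. The functions $x\mapsto\Phi(p_n;x)$ are concave with uniformly bounded second derivatives in $x$, because the Hessian is a variance of the bounded function $h$. Then $\varphi$ is an infimum-type envelope with uniform semiconvexity: $\varphi(x)+C|x|^2$ is a supremum of functions convex in $x$, hence convex. A function that is both concave and semiconvex is $C^{1,1}$. This argument avoids compactness altogether and may be the cleanest route.
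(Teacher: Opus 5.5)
Your Lipschitz/concavity step is the paper's: the paper writes the $x$-Hessian of $\bar F_N^{m\mapsto x\cdot m}$ as minus a Gibbs variance instead of invoking H\"older, but this is the same fact. For differentiability, your \emph{fallback} is a complete, correct proof and is essentially the paper's argument. Write $\msc P(p,x)$ for the functional inside the supremum in \eqref{e.varphi=sup}. The paper picks an $\eps$-maximizer $p_\eps$ at $x$. It then sandwiches any supergradient $a$ of $\varphi$ at $x$ between $r^{-1}(\msc P(p_\eps,x+ry)-\msc P(p_\eps,x)-\eps)$ and $r^{-1}(\msc P(p_\eps,x)-\msc P(p_\eps,x-ry)+\eps)$. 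Using the bound on the $x$-Hessian of $\msc P(p,\cdot)$, uniform in $p$, it gets $|y\cdot a-y\cdot\nabla_x\msc P(p_\eps,x)|\leq Cr+\eps r^{-1}$, and finally sets $r=\sqrt\eps$. Your remark that $\varphi+\frac12\|h\|_\infty^2|x|^2$ is a supremum of convex functions, hence convex, packages the same uniform estimate more cleanly. It needs no near-maximizing sequence, so your $p_n$ can simply be deleted. It also yields the stronger conclusion that $\nabla\varphi$ is $\|h\|_\infty^2$-Lipschitz, which makes the appeal to \cite[Theorem~25.5]{rockafellar1970convex} unnecessary.

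What you present as the main route should be discarded. Step (ii) is correct, but step (i) is only asserted, and the justification you sketch does not work as written. Convexity of $\xi$ on $\S^D_+$ gives $\theta(p)\geq\theta(0)$ but no quantitative growth. Moreover, $\psi$ grows linearly in its path argument $q+t\nabla\xi(p)$, not in $p$. So the comparison you would actually need is an inequality between $\theta$ and $|\nabla\xi|$ on $\S^D_+$, which you have not established.

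The claimed confinement of near-maximizers to a bounded set is in fact false. Take $D=2$ and $\xi(a)=a_{11}^2+a_{22}$, which is convex and an admissible covariance. Then $\nabla\xi(a)=\mathrm{diag}(2a_{11},1)$ and $\theta(a)=a_{11}^2$, so the functional depends on $p$ only through $p_{11}$, and near-maximizers can have arbitrarily large $p_{22}$. Existence of optimizers in this vector setting is a separate and nontrivial question. The point of the paper's argument, and of your fallback, is that it is never needed, so make the fallback your proof.
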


\begin{proof}
We denote by by $\msc P(p,x)$ the expression inside $\sup_{p\in\mcl Q_\infty}\{\cdots\}$ in~\eqref{e.varphi=sup}, so that 
\begin{e*}
    \varphi(x)=\sup_{p\in\mcl Q_\infty}\msc P(p,x).
\end{e*}
Furthermore, we have $\nabla_x \bar F^{m \mapsto x \cdot m}_N = -\E \la  m_N\ra_{N}^{m \mapsto x \cdot m}$, which is bounded uniformly in $N$. For any $v\in\R^d$, we can compute 
\begin{e*}
    (v\cdot\nabla_x)^2\bar F^{m \mapsto x \cdot m}_N = -\E \la (v\cdot m_N)^2\ra_{N}^{m \mapsto x \cdot m}+ \left( \E \la v\cdot m_N\ra_{N}^{m \mapsto x \cdot m}\right)^2\leq 0.
\end{e*}
Therefore, $\varphi$ is Lipschitz and concave.

It is classical that the combination of convexity and differentiability implies continuous differentiability (e.g.\ \cite[Theorem~25.5]{rockafellar1970convex}). Hence, it only remains to show that $\varphi$ is differentiable everywhere.
Fix any $x \in \R^d$. 
A vector $a\in \R^d$ is said to be a superdifferential of $\varphi$ at $x$ when for every $y\in\R^d$,
\begin{e*}
    \varphi(y)-\varphi(x)\leq a\cdot(y-x).
\end{e*} 
Since $\varphi$ is concave, it suffices to show that any superdifferential $a$ of $\varphi$ at $x$ is unique. 
For each $\eps>0$, we choose $p_\eps$ to satisfy
\begin{align}\label{e.P(pi_eps,x)<P(x)+1/n}
    \msc P(p_\eps,x) \geq \varphi(x) -\eps.
\end{align}
Fix any $y\in \R^d$ and let $r\in (0,1]$.
Using the definition of the superdifferential, the fact that $\varphi$ is a supremum, and \eqref{e.P(pi_eps,x)<P(x)+1/n}, we have
\begin{align*}
    y\cdot a &\geq \frac{\varphi(x+ry) - \varphi(x)}{r} \geq \frac{\msc P(p_\eps,x+ry)-\msc P(p_\eps,x)-\eps}{r},
    \\
    y\cdot a &\leq \frac{\varphi(x) - \varphi(x-ry)}{r} \leq \frac{\msc P(p_\eps,x)-\msc P(p_\eps,x-ry)+\eps}{r}.
\end{align*}
Using the expression in~\eqref{e.psi(q;x)=}, we can compute derivatives of $\msc P(p_\eps,\cdot)$ at $x$ and verify that they are bounded.
Hence, we can use Taylor's expansion of $\msc P(p_\eps,\cdot)$ at $x$ in the above display to get for some constant $C$ independent of $r$ and $\eps$, we have 
\begin{align*}
    \Ll|y\cdot a - y\cdot \nabla\msc P(p_\eps,x)\Rr| \leq C r+\eps r^{-1}.
\end{align*}
Setting $r= \sqrt{\eps}$ and sending $\eps\to0$, we can see that $y\cdot a$ is uniquely determined. Since $y$ is arbitrary, we conclude that the subdifferential of $\varphi$ at $x$ is unique, and thus $\varphi$ is differentiable at $x$. As explained earlier, this implies that $\varphi$ is continuously differentiable.
\end{proof}

\begin{remark}
    Since $\varphi$ is Lipschitz we have $\varphi^*(m) = +\infty$ for $|m| > L$ where $L$ is a Lipschitz constant of $\varphi$.
\end{remark}

\section{Proofs of main results} \label{s.LDP2}

In this section, we provide a proof of our main results Theorem~\ref{t.F_N(...)} and Theorem~\ref{t.LDP_gen}. We will rely on the differentiability of $\varphi(x)=  \lim_{N \to +\infty} \bar{ F}_N^{m \mapsto x \cdot m}$ as well as classical large deviation tools. 
Namely we will use the  Gärtner--Ellis theorem \cite[Theorem~2.3.6]{dembo2009large}, Varadhan's lemma \cite[Theorem~4.3.1]{dembo2009large}, and Bryc's inverse Varadhan lemma \cite[Theorem~4.4.13]{dembo2009large}.


\begin{lemma}\label{l.LDP_<>}
There is a full-measure subset $\Omega' \subset \Omega$ such that, for every $\omega\in\Omega'$, $m_N$ under $\la\cdot\ra^{0, \omega}_{N}$ satisfies large deviation principle with rate function $I$ defined by
\begin{align}\label{e.I_x=}
    I(m) = \varphi^*(m) - \varphi(0),\quad\forall m \in\R^d.
\end{align}
\end{lemma}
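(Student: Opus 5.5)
The plan is to apply the Gärtner--Ellis theorem \cite[Theorem~2.3.6]{dembo2009large} to $m_N$ under $\la\cdot\ra^{0,\omega}_N$. The key identity is that for every $x\in\R^d$,
\begin{e*}
    \frac{1}{N}\log \la e^{N x\cdot m_N}\ra^{0}_N = F^0_N - F^{m\mapsto x\cdot m}_N,
\end{e*}
which holds realization-wise directly from the definitions \eqref{e.H^q,x_N(sigma,alpha,chi)=} and \eqref{e.F_N(beta)}. (The Gibbs measure also integrates over $\alpha$, but $m_N$ depends only on $\sigma$ and $\chi$, so this causes no issue.) Thus the logarithmic moment generating function of $m_N$ is $\Lambda_N(x)=F^0_N - F^{m\mapsto x\cdot m}_N$. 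By Proposition~\ref{p.F_N(q,x)} and \eqref{e.def.phi}, we want $\Lambda_N(x)\to \Lambda(x):=\varphi(0)-\varphi(x)$ almost surely.

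\textbf{Step 1: a.s.\ convergence simultaneously for all $x$.} For fixed $x$, concentration of $F_N^G$ around $\bar F_N^G$ \cite[Theorem~1.2]{pan} gives exponential tail bounds of the form $\P(|F_N-\bar F_N|>\eps)\leq Ce^{-cN\eps^2}$. Here $G(m)=x\cdot m$ is linear and $h$ is bounded, so the dependence on $\chi$ is also handled; if necessary, condition on $\chi$ and use bounded differences in the $\chi_i$. Borel--Cantelli then gives $F^{m\mapsto x\cdot m}_N\to\varphi(x)$ a.s. We do this for all $x$ in the countable set $\Q^d$ and intersect the resulting events to get $\Omega'$. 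To pass to all $x\in\R^d$, note that $x\mapsto F^{m\mapsto x\cdot m}_N$ is Lipschitz with constant $\sup|h|$ uniformly in $N$ and $\omega$, since its gradient is $-\la m_N\ra$ and $|m_N|\leq \sup|h|$. Likewise $\varphi$ is Lipschitz by Lemma~\ref{l.varphi}. A density argument then gives convergence for every $x$ on $\Omega'$. I expect this uniformity over $x$ (and later over all $G$) to be the only delicate point, and equicontinuity resolves it cleanly.

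\textbf{Step 2: apply Gärtner--Ellis.} The limit $\Lambda(x)=\varphi(0)-\varphi(x)$ is finite everywhere, so the origin lies in the interior of its domain. It is convex and, by Lemma~\ref{l.varphi}, differentiable on all of $\R^d$, so it is essentially smooth and steepness is vacuous. Gärtner--Ellis therefore gives a full LDP with good rate function
\begin{e*}
\Lambda^*(m)=\sup_x\{x\cdot m-\varphi(0)+\varphi(x)\}=\varphi^*(m)-\varphi(0)=I(m).
\end{e*}
Finally, we note that $m_N$ takes values in a bounded set, so exponential tightness is automatic. The set $\Omega'$ constructed in Step 1 does not depend on anything beyond $h$ and the model, as required.
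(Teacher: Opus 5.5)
Your proposal is correct and follows essentially the same route as the paper. You write the log-moment generating function as $\Lambda_N(y)=F_N^{0}-F_N^{m\mapsto y\cdot m}$, obtain almost sure convergence for $y\in\Q^d$, extend it to all of $\R^d$ by the uniform Lipschitz bound, and conclude with the Gärtner--Ellis theorem using the differentiability of $\varphi$ from Lemma~\ref{l.varphi}. The only difference is that you spell out the concentration plus Borel--Cantelli argument that produces $\Omega'$, which the paper takes for granted; only summability of the tail bounds is needed there.
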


\begin{proof}
Let $\Omega' \subset \Omega$ be the full-measure subset such that for every $\omega\in\Omega'$, we have for all $y\in \Q^d$,
\begin{equation*}
    \lim_{N\to\infty} F^{m \mapsto m \cdot y, \omega}_N = \varphi(y).
\end{equation*}
Henceforth, we fix $\omega\in\Omega'$ and define for each $y\in\R^d$,
\begin{e*}
    \Lambda_N(y)= \frac{1}{N}\log \la e^{Ny\cdot m_N}\ra^{0,\omega}_{N}.
\end{e*}
Notice that $\Lambda_N(y) = -F_N^{m \mapsto y \cdot m,\omega} + F_N^{0,\omega}$. Hence, we have  for every $y\in \Q^d$
\begin{e*}
    \lim_{N\to\infty}\Lambda_N(y)=-\varphi(y)+\varphi(0).
\end{e*}
Since $\Lambda_N$ is Lipschitz uniformly in $N$ and $\varphi$ is also Lipschitz, the previous display in fact holds for every $y \in \R^d$. Finally, since by Lemma~\ref{l.varphi}, $\varphi$ is differentiable everywhere, the Gärtner--Ellis theorem (e.g., see~\cite[Theorem~2.3.6]{dembo2009large}) guarantees that $m_N$ satisfies a large deviation principle under $\la\cdot\ra^{0,\omega}_{N}$ with rate function 
\begin{e*}
    I(m) = \sup_{y \in \R^d} \left\{ y \cdot m - \lim_{N \to +\infty} \Lambda_N(y) \right\} = \varphi^*(m) - \varphi(0).
\end{e*}
\end{proof}

\begin{lemma}\label{l.log<e^G>}
For every continuous function $G:\R^\d\to\R$ and every $\omega\in\Omega'$ (given as in Lemma~\ref{l.LDP_<>}), we have 
\begin{e*}
    \lim_{N\to\infty}\frac{1}{N}\log\la e^{NG(m_N)}\ra^{0,\omega}_{N}=\sup_{m\in\R^d}\Ll\{G(m)-I(m)\Rr\},
\end{e*}
with $I$ given as in~\eqref{e.I_x=}.
\end{lemma}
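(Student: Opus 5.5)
This is a direct application of Varadhan's lemma \cite[Theorem~4.3.1]{dembo2009large} to the large deviation principle from Lemma~\ref{l.LDP_<>}. Fix $\omega\in\Omega'$ and a continuous $G:\R^d\to\R$. By Lemma~\ref{l.LDP_<>}, the law of $m_N$ under $\la\cdot\ra^{0,\omega}_N$ satisfies an LDP with rate function $I=\varphi^*-\varphi(0)$. The quantity of interest is
\begin{e*}
    \frac{1}{N}\log\la e^{NG(m_N)}\ra^{0,\omega}_N,
\end{e*}
which is precisely the exponential moment that Varadhan's lemma computes. It gives the limit $\sup_m\{G(m)-I(m)\}$, provided two things hold: $I$ is a good rate function, and the family $e^{NG(m_N)}$ satisfies a tail or moment condition.

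\textbf{Step 1: goodness of $I$.} The function $\varphi^*$ is a supremum of affine functions of $m$, so it is lower semicontinuous and convex. By the Remark following Lemma~\ref{l.varphi}, $\varphi^*(m)=+\infty$ for $|m|>L$, where $L$ is a Lipschitz constant of $\varphi$. Hence every sublevel set of $I$ is closed and contained in the ball of radius $L$, so it is compact. In addition, $I\geq 0$: taking $y=0$ in the definition of $\varphi^*$ gives $\varphi^*(m)\geq\varphi(0)$.

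\textbf{Step 2: the tail condition.} Here boundedness does all the work. The function $h$ is bounded, say $|h|\leq K$ on the relevant support. Then $m_N$ lies in the compact ball $B_K=\{|m|\leq K\}$ almost surely under every Gibbs measure and for every $N$. On $B_K$ the continuous function $G$ is bounded, so $e^{NG(m_N)}\leq e^{N\sup_{B_K}G}$. Consequently the moment condition
\begin{e*}
    \limsup_N \frac1N\log\la e^{\gamma NG(m_N)}\ra^{0,\omega}_N<\infty \quad\text{for some }\gamma>1
\end{e*}
holds trivially, and so does the tail condition in \cite[Theorem~4.3.1]{dembo2009large}. One can also avoid unbounded $G$ altogether. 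Replace $G$ by a bounded continuous $\tilde G$ that agrees with $G$ on $B_K$, for instance $G$ composed with the radial projection onto $B_K$. This leaves the left-hand side unchanged. It also leaves the right-hand side unchanged, provided $I=+\infty$ outside $B_K$.

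\textbf{Step 3: confining $I$ to $B_K$.} This is the only point needing a small check. Since $m_N\in B_K$, the upper bound of the LDP applied to the closed set $A=\R^d\setminus B_{K+\delta}^\circ$ gives $-\infty\leq -\inf_{B_{K+\delta}^\circ} I$ trivially. More usefully, apply the lower bound to the open set $A=\R^d\setminus B_K$: it gives $-\inf_{A}I\leq \liminf \frac1N\log 0=-\infty$, so $I=+\infty$ outside $B_K$. Alternatively this follows directly from $\varphi$ being $K$-Lipschitz, since $|\nabla\bar F_N|\leq K$. Hence $\sup_m\{G-I\}=\sup_m\{\tilde G-I\}$.

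With these three steps, Varadhan's lemma applied to the bounded continuous function $\tilde G$ yields the claim. I do not expect any real obstacle: the argument only combines the LDP of Lemma~\ref{l.LDP_<>} with the boundedness of $h$. The one place requiring care is the reduction to bounded $G$ in Step 3.
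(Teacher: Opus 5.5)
Your proposal is correct and follows the paper's approach: the paper's proof is the single observation that $m_N$, and hence $G(m_N)$, is uniformly bounded, which allows Varadhan's lemma \cite[Theorem~4.3.1]{dembo2009large} to be applied to the LDP of Lemma~\ref{l.LDP_<>}. Your additional checks are details the paper leaves implicit: goodness of $I$ via the Lipschitz bound on $\varphi$, the trivially satisfied tail/moment condition, and the optional reduction to a bounded $\tilde G$ using $I=+\infty$ off $B_K$.
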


\begin{proof}
Since $m_N$ is bounded uniformly in $N$, so is $G(m_N)$, which allows us to apply Varadhan's lemma (e.g., see~\cite[Theorem~4.3.1]{dembo2009large}) to deduce the desired result from the large deviation principle in Lemma~\ref{l.LDP_<>}.
\end{proof}

Using Lemma~\ref{l.log<e^G>} in conjunction with Bryc's inverse Varadhan lemma, we obtain a proof of Theorem~\ref{t.LDP_gen}.

\begin{proof}[Proof of Theorem~\ref{t.LDP_gen}]
Fix any $G$ and let $\Gamma:\R^d\to\R$ be any bounded continuous function. Since 
\begin{e*}
    \frac{1}{N}\log\la e^{N\Gamma(m_N)}\ra^{G,\omega}_{N} = \frac{1}{N}\log\la e^{N(G+\Gamma)(m_N)}\ra^{0,\omega}_{N} - \frac{1}{N} \log\la e^{NG(m_N)}\ra^{0,\omega}_{N},
\end{e*}
Lemma~\ref{l.log<e^G>} implies that for every $\omega \in \Omega'$ we have 
\begin{e*}
    \lim_{N\to\infty}\frac{1}{N}\log\la e^{N\Gamma(m_N)}\ra^{G,\omega}_{N}=\sup\Ll\{G+ \Gamma-\varphi^* \Rr\} - \sup\Ll\{G-\varphi^* \Rr\} = \sup \{\Gamma - I^G\}.
\end{e*}
Since this holds for any bounded continuous $\Gamma$, we can apply Bryc's inverse Varadhan lemma (e.g., see the ``$\Longleftarrow$'' direction in~\cite[Theorem~4.4.13]{dembo2009large}) which guarantees that $m_N$ satisfies a large deviation principle under $\la\cdot\ra^{G,\omega}_{N}$ with rate function $I^G$.
\end{proof}


From Lemma~\ref{l.log<e^G>}, we can derive the Parisi-type formula for $\lim_{N \to +\infty}  F_N^G$.

\begin{proof}[Proof of Theorem~\ref{t.F_N(...)}]
Notice that 
\begin{e*}
    \frac{1}{N}\log\la e^{NG(m_N)}\ra^{0,\omega}_{N}= -F^{G,\omega}_N + F^{0,\omega}_N.
\end{e*}
Using Lemma~\ref{l.log<e^G>}, we get that for every $\omega \in \Omega'$
\begin{e*}
    \lim_{N\to\infty}F^{G,\omega}_N = \inf_{m \in \R^d} \left\{\varphi^*(m) - G(m) \right\}.
\end{e*}
Finally, using the variational formula for $\varphi(x)$ of Proposition~\ref{p.F_N(q,x)}, we observe that the right-hand side in the previous display is equal to the right-hand side in \eqref{e.t.gen}.
\end{proof}


\small
\bibliographystyle{plain}
\bibliography{ref}

\end{document}